\documentclass[12pt,epsfig]{amsart}
\usepackage{latexsym}
\usepackage{amsmath,amscd,amssymb}
\usepackage{graphics}

\textwidth=15cm
\hoffset=-1.6cm
 2

\include{plaquettemacro}
\usepackage{hyperref}
\newtheorem{theorem}{Theorem}[section]
\newtheorem{proposition}{Proposition}[section]

\newtheorem{definition}[theorem]{Definition}

\newtheorem{remark}[theorem]{Remark}
\numberwithin{equation}{section}

\newcommand{\mbr}{{\mathbb R}}

\newcommand{\mM}{{\mathcal{M}}}

\title{Symplectic cohomology (defined by Krigel and Michor) of the loop space $L\mbr^n$ }
\author{Pradip Kumar}
\address{Department of Mathematics,
Harish Chandra Research Institute\\ Allahabad 211019, Uttarpradesh\\ India} \email{pmishra@hri.res.in}
\subjclass[2010]{53D35, 55P35}
\keywords{Loop space, symplectic structures.}
\begin{document}

\maketitle
\begin{abstract}
For a finite dimensional symplectic manifold $(M,\omega)$ with a symplectic form $\omega$,  corresponding loop space ($LM=C^\infty(S^1,M)$) admits a weak symplectic form $\Omega^\omega$.  We  prove that the loop space over $\mbr^n$ admits Darboux chart for the weak symplectic structure $\Omega^\omega$.  Further, we show that inclusion map from the symplectic cohomology (as defined by Kriegl and Michor \cite{KM}) of the loop space over $\mathbb R^n$  to the De Rham cohomology of the loop space is an isomorphism.
\end{abstract}
\section{Introduction}
Let $(M,\omega)$ be a finite dimensional symplectic manifold. Loop space $LM:=C^\infty(S^1,M)$ is a nuclear Fr\'{e}chet manifold.  For manifold structure on $LM$ we refer \cite{Michor},\cite{KM}.  For $\gamma\in LM$, $T_\gamma LM= \Gamma_{S^1}(\gamma^*TM)$.  As $M$ is oriented, we can identify $\Gamma_{S^1}(\gamma^*TM)$ with $L\mbr^n$.

For $X, Y\in T_\gamma LM=\Gamma_{S^1}(\gamma^*TM)$, $X(t), Y(t)\in \gamma^*(TM)$,  define:
\begin{equation}\label{defn:symlecticfromsymlectic}\Omega^\omega_\gamma(X,Y)= \int_0^1\omega_{\gamma(t)}(X(t), Y(t)) dt
\end{equation}

Recall, by a weak symplectic form $\Omega$ on a Fr\'{e}chet manifold $\mM$, we mean that the induced map $\Omega^\vee: T_p \mM\to T^*_p\mM$ is an injective map and $\Omega$ is a closed 2 form.  By a strong symplectic form $\Omega$ on an infinite dimension manifold $\mM$ (Banach or Fr\'{e}chet manifold), we mean that the corresponding map $\Omega^\vee:T\mM\to T^*\mM$ is a topologically isomorphism.

In section 3, we will see that $\Omega^\omega$ as defined above in equation \ref{defn:symlecticfromsymlectic} is a closed form on $L\mbr^n$. We will see that it is not strong symplectic form  rather it is only a weak symplectic form. We mention here that there does not exist any strong symplectic form on $LM$. For example, in the case of $L\mbr^n$,  where $T_\gamma L\mbr^n\approx L\mbr^n$ but $T_\gamma^*L\mbr^n$, the dual space of $L\mbr^n$,  is the set of all $\mbr^n$ valued distribution on circle [Page 39,\cite{stacey2}].  Therefore  $T_\gamma L\mbr^n$ can not be topologically isomorphic to $T^*_\gamma L\mbr^n$.  In general if $\Omega$ is a symplectic form on Fr\'{e}chet manifold, it  can not be strong symplectic. However it may be weak symplectic ($\Omega^\vee$ injective) or quasi symplectic (kernel of $\Omega^\vee$ is finite dimension). For example, we can see that $\Omega^g$ (symplectic structure on loop space induced from a Riemannian metric $g$, defined in \cite{wurzbacher}) is quasi symplectic for $LM$ and weak symplectic for $L_pM$ for any $p\in M$ ( here $L_pM:=\{\gamma\in LM: \gamma(0)\in M\}$ is a submanifold of $LM$).

If $(M,\sigma)$ is a finite dimensional symplectic manifold, Darboux theorem states that every point in $M$ has a coordinate neighborhood $N$ with coordinate functions $(x_1,.., x_n, y_1,.., y_n)$ such that $\sigma= \sum_{i=1}^n dx_i\wedge dy_i$ on $N$.  This chart is called Darboux chart around $p\in M$. In the infinite dimension setting we have the following definitions:

Let $\mM$ be a Fr\'{e}chet manifold modeled on a Fr\'{e}chet space $E$. By a Darboux chart around $p\in \mM$, we mean a coordinate chart $(\mathfrak{U},\Phi)$ around $p$ such that there exists a bounded alternating bilinear map $\mathcal{F}$ on $E$ such that for $v_1,v_2\in T_q\mM$, $\sigma_{\Phi(q)}(v_1, v_2)= F(d\Phi_q(v_1),d\Phi_q(v_2))$ for every $q\in \mathfrak{U}$. This chart around $p$ is called Darboux chart.  We say that $(\mM,\sigma)$ admits Darboux chart if for every $p\in \mM$ there is a Darboux chart around $p\in \mM$.

Our aim in this article  is to address the following questions:
\begin{enumerate}
\item Does the weak symplectic manifold $(L\mbr^n,\Omega^\omega)$ admit Darboux chart.  In theorem \ref{thm:isotopyalpha}, we prove that if there is an isotopy $\phi$ such that $\phi^*(\omega_s)= \omega_0$, then there exists an isotopy on loop space $\phi^L$ such that $(\phi^L)^*(\Omega^{\omega_s})=\Omega^{\omega_0}$. This prove that $(L\mbr^n,\Omega^\omega)$ admits Darboux chart.
\item As a corollary of existence of Darboux chart on loop space, in section 5, we prove that the inclusion map from the  symplectic cohomology (as defined by Kriegl and Michor \cite{KM}) of the loop space over $\mathbb R^n$  to the De Rham cohomology of $L\mbr^n$ is an isomorphism.
\end{enumerate}
At the last we will make remark on above questions for the general loop space $LM$ in place of $L\mbr^n$.

We will start this article by a discussion about the calculus which we will be using for the loop space. In section 2,  we will describe a calculus on loop space.  We will describe an isotopy of the loop space.  Section 3,4 are about the symplectic form $\Omega^\omega$ on $L\mbr^n$ and proof for existence of Darboux chart on $L\mbr^n$.   In section 5, we will revise definition of the symplectic cohomology defined by Kriegl and Michor in \cite{KM}.  This definition of symplectic cohomology is not same as the definition of the symplectic cohomology by Floer.   We show that the  symplectic cohomology of the loop space $L\mbr^n$ defined by Kriegl and Michor \cite{KM} is isomorphic to the De-Rham cohomology.
\\

\textbf{Acknowledgement:}  I wish thank to Dr. Saikat Chatterjee for explaining the weak symplectic structure $\Omega^\omega$ as defined in equation \ref{defn:symlecticfromsymlectic} and  sharing his work (with Prof. Indranil Biswas and Prof. Rukmini Dey) on pre-quantization of a space.

%We make a remark that in equation \ref{defn:symlecticfromsymlectic} and in next sections of this article, we used the identification of $T_\gamma L\mbr^n= \Gamma(S^1, T^*L\mbr^n)\approx C^\infty(S^1, \mbr^n)$.  Hence $X\in T_\gamma L\mbr^n$ gives unique $\widetilde{X}:S^1\to \mbr^n$, If based path $\gamma$ is clear from reference, denote $\widetilde{X}$ by $X$ itself. other way if we have $X:S^1\to \mbr^n$, we have $X_\gamma\in \Gamma(S^1, T^*L\mbr^n)$, denote $X_\gamma$ with $X$ itself.

\section{Smooth structure on Loop space and Isotopy}
For a smooth loop space we have to leave the realm of Banach space, but as soon as we are leaving the realm of Banach space, there are various definitions of smooth map on locally convex spaces (see at the end of first chapter of \cite{KM}). Even for our case $L\mbr^n$ (a Fr'{e}chet space), there are three inequivalent definitions of differentiability (3.1 \cite{stacey2}). Kriegl and Michor define a smooth linear map between locally convex space as a bounded linear map \cite{KM}, but then smooth linear map may not be continuous, so for removing this intricacy they took finest locally convex topology such that bounded map becomes continuous. This finest locally convex topology of given space is called the Bornology.  Finally they define a smooth map as a map which takes smooth curve to smooth curve. In the Fr\'{e}chet space we do not have much problem because the Fr\'{e}chet space topology is the bornology of the space. This notion of smoothness agrees with the  notion of  R.S. Hamilton approach in \cite{hamilton}.  For our object of study,  the loop space,  Andrew Stacey has provided a detail discussion in \cite{stacey2} based on Kriegl and Michor \cite{KM} calculus. Following the proposition 3.1 of \cite{stacey2}, we have the definition: $c: \mbr\to L\mbr^n$ is smooth if and only if induced map $c^\vee:\mbr\times S^1\to \mbr^n$ is smooth.  A map $f: (U\subset L\mbr^n)\to L\mbr^n$ is smooth if and only if $f$ maps smooth curves to smooth curve. Also if we want to find the derivative of $f$, we have a consistent definition of derivative as directional derivative following by Hamilton \cite{hamilton}.

Now let us define the isotopy which we will use in later sections.
\begin{definition}
Smooth map $\phi: \mbr\times L\mbr^n \to L\mbr^n$ is called an isotopy if each $\phi_s: L\mbr^n\to L\mbr^n$ is a diffeomorphism and $\phi_0= Id_{L\mbr^n}$.
\end{definition}

Let $\phi:\mbr\times \mbr^n\to \mbr^n$ is an isotopy then define
\begin{equation}\phi^L:\mbr\times L\mbr^n\to L\mbr^n;\;\;\phi^L(s,\gamma)= (t\to \phi_s(\gamma(t)))\end{equation}

\begin{proposition} $\phi^L$ as defined above is an isotopy  on $L\mbr^n$. For $X\in T_\gamma L\mbr^n\approx C^\infty(S^1,\mbr^n)$, derivative of $\phi^L_s$ at $\gamma$ is given by $d\phi^L_s(\gamma)(X)= \left(t\to d\phi_s(\gamma(t))(X(t))\right).$
\end{proposition}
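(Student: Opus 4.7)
The plan is to verify the three defining conditions of an isotopy for $\phi^L$ and then compute the derivative using the directional-derivative characterization mentioned in the previous paragraph. The key enabling fact is the Stacey/Kriegl--Michor criterion recalled in the text: a map into $L\mathbb{R}^n$ is smooth iff it sends smooth curves to smooth curves, and a curve $c:\mathbb{R}\to L\mathbb{R}^n$ is smooth iff its adjoint $c^\vee:\mathbb{R}\times S^1\to \mathbb{R}^n$ is smooth.

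First I would establish smoothness of $\phi^L:\mathbb{R}\times L\mathbb{R}^n\to L\mathbb{R}^n$. Let $u\mapsto (s(u),\gamma_u)$ be an arbitrary smooth curve in $\mathbb{R}\times L\mathbb{R}^n$; by the curve criterion, $s$ is smooth and the adjoint $(u,t)\mapsto \gamma_u(t)$ is smooth on $\mathbb{R}\times S^1$. The adjoint of $u\mapsto \phi^L(s(u),\gamma_u)$ is then the composition
\begin{equation*}
(u,t)\longmapsto (s(u),\gamma_u(t))\longmapsto \phi(s(u),\gamma_u(t))=\phi_{s(u)}(\gamma_u(t)),
\end{equation*}
which is smooth as a composition of smooth finite-dimensional maps. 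Invoking the curve criterion in the other direction shows $\phi^L$ is smooth. The condition $\phi^L_0=\mathrm{Id}_{L\mathbb{R}^n}$ is immediate from $\phi_0=\mathrm{Id}_{\mathbb{R}^n}$. For each fixed $s$, the map $\psi_s:=\phi_{-s}\circ\ldots$ \textemdash{} more cleanly, the isotopy $s\mapsto \phi_s^{-1}$ is again a smooth family of diffeomorphisms, so applying the previous argument to it produces a smooth $(\phi_s^{-1})^L$, which is a two-sided inverse of $\phi^L_s$ pointwise and hence in $L\mathbb{R}^n$. Thus each $\phi^L_s$ is a diffeomorphism.

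For the derivative formula, I would use the directional-derivative definition (Hamilton's framework mentioned in the text). Fix $\gamma\in L\mathbb{R}^n$ and $X\in T_\gamma L\mathbb{R}^n\cong C^\infty(S^1,\mathbb{R}^n)$, and consider the smooth curve $u\mapsto \gamma+uX$ in $L\mathbb{R}^n$ (smooth by the curve criterion, since $(u,t)\mapsto \gamma(t)+uX(t)$ is smooth). Evaluating at $t$,
\begin{equation*}
\frac{d}{du}\Big|_{u=0}\phi^L_s(\gamma+uX)(t)=\frac{d}{du}\Big|_{u=0}\phi_s\bigl(\gamma(t)+uX(t)\bigr)=d\phi_s(\gamma(t))(X(t)),
\end{equation*}
and the limit is uniform in $t$ together with all its $t$-derivatives because $\phi_s$ and $\gamma,X$ are smooth on the compact circle, so the identification of this pointwise derivative with the loop-space derivative is legitimate in the bornological/Fr\'echet sense.

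The only mildly subtle point is the last one: ensuring that the pointwise directional derivative really represents $d\phi^L_s(\gamma)(X)$ in $T_\gamma L\mathbb{R}^n$, i.e., that the difference quotients converge in the Fr\'echet topology of $C^\infty(S^1,\mathbb{R}^n)$ and not merely pointwise. This follows from Taylor's theorem applied to $\phi_s$ on a compact neighbourhood of the image of $\gamma$, giving uniform bounds on every seminorm $\|\cdot\|_{C^k}$; this is the technical core of the argument and the step where one must be most careful in the Fr\'echet setting. Everything else is bookkeeping within the Kriegl--Michor calculus.
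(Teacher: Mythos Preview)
Your proposal is correct and follows essentially the same route as the paper: smoothness of $\phi^L$ via the curve/adjoint criterion, inverse via $(\phi_s^{-1})^L$, and the derivative via the difference quotient of $u\mapsto \gamma+uX$ evaluated pointwise in $t$. The paper handles your ``mildly subtle point'' more tersely, using only that evaluation at $t$ is a continuous linear functional to identify the limit (existence of the derivative being implicit from the smoothness already shown), whereas you spell out Fr\'echet convergence via Taylor bounds on the compact circle; both are valid.
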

\begin{proof}
Take any smooth curve $c:\mathbb R\to \mathbb R\times  L\mbr^n$, $c(u)= (c_1(u), c_2(u))$. Then
\begin{eqnarray*}
\tilde{c}(u)&:& \mbr\to L\mbr^n\text{ defined by }\\
\tilde{c}(u)&=& \phi^L\circ c= \phi^L(c_1(u), c_2(u))\\
\tilde{c}(u)&=& (t\to \phi_{c_1(u)}(c_2(u)(t))).
\end{eqnarray*}
By the definition of smooth map in $L\mbr^n$, $\tilde{c}$ is smooth if and only if $\tilde{c}^\vee:\mbr\times S^1\to\mbr^n$ defined by, $\tilde{c}^\vee(u,t)\to \phi(c_1(u),c_2(u)(t))$ is smooth. As $\phi$  and $c_1$, $c_2$ smooth map, we have $\tilde{c}$ is a smooth map.

This shows that for  every $c: \mbr\to R\times L\mbr^n$, we have $\phi^L\circ c$ is smooth.  Therefore by the definition of smooth map, which we fixed, we see that $\phi^L$ is a smooth map.

For $s\in \mbr$, $\phi_s$ is a diffeomorphism. Let $\psi_s$ is the inverse of $\phi_s$. Let $\psi^L_s$ is defined in the similar way.  The inverse of $\phi^L_s$ will  $\phi^L_s$. Therefore we $\phi_s^L$ is a diffeomorphism.

Also we have $\phi^L_0= Id_{L\mbr^n}$.  This proves that $\phi^L$ is an isotopy of $L\mbr^n$.

Now we will calculate the derivative of the map $\phi^L_s$ for fixed $s\in \mbr$;
$$\phi^L_s:L\mbr^n\to L\mbr^n\;\; ;\phi^L_s(\gamma)(t)=\phi_s(\gamma(t))$$
Let $\gamma\in L\mbr^n$, then $d\phi^L_s(\gamma):T_\gamma L\mbr^n\to T_{\phi^L_s(\gamma)}L\mbr^n$.  Take $X\in T_\gamma L\mbr^n\approx C^\infty(S^1,\mbr^n)$,  $d\phi^L_s(\gamma)(X)$ is a vector field along $\phi^L_s(\gamma)$. Derivative is determined by directional derivative.

Identify $T_\gamma L\mbr^n$ with $C^\infty(S^1,\mbr^n)$. $d\phi^L_s(\gamma)(X)$ is an element of $T_{\phi^L_s(\gamma)}L\mbr^n\approx C^\infty(S^1,\mbr^n)$ which is the limit of net $\left\{\frac{\phi^L_s(\gamma+uX)-\phi^L(\gamma)}{u}\right\}$. Evaluation at time $t$ is a continuous linear map $L\mbr^n\to \mbr$ which takes this net to
\begin{eqnarray*}
&&\left\{\frac{\phi^L_s(\gamma+uX)(t)-\phi^L_s(\gamma)(t)}{u}\right\}\\
=&&\left\{\frac{\phi_s(\gamma(t)+uX(t))-\phi_s(\gamma(t))}{u}\right\}
\end{eqnarray*}
This is a differential quotient which tends to $d\phi_s(\gamma(t))(X(t))$.  Therefore we have
\begin{equation}\label{derivativeofphiL}d\phi_s^L(\gamma)(X)= (t\to d\phi_s(\gamma(t))(X(t)))
\end{equation}
\end{proof}

We will be needed the following function in next sections.  Let us define a map on loop space corresponding to a map on base manifold.
If $f:\mbr^n\to \mbr$ is $C^\infty$ function then define
\begin{eqnarray}\label{functionf}&&\label{f}\tilde{f}: L\mbr^n\to \mbr\text{ by}\\
&&\tilde{f}(\gamma)=\int_0^1f(\gamma(t)) dt\nonumber
\end{eqnarray}
Then using the same argument as above, we have for $\gamma\in L\mbr^n$, $X\in T_\gamma(L\mbr^n)\approx C^\infty(S^1, \mbr^n)$,
\begin{equation}\label{derivativeoff}d\tilde{f}_\gamma(X)= \int_0^1 df_{\gamma(t)}(X(t))dt\end{equation}

\section{Symplectic structure on Loop space}
Let $\omega$ be a symplectic form on a finite dimensional manifold $M$.  Define for each $\gamma\in LM$, and $X,Y\in T_\gamma LM \approx L\mbr^n$,
\begin{equation}\label{omegaalpha}\Omega^\omega_\gamma(X,Y)=\int \omega_{\gamma(t)}(X(t),Y(t))dt.\end{equation}

Observe that the $\Omega^\omega$, a 2-form on $LM$ is same as  Chen's ``Iterated Integral" \cite{chen}. Therefore it is a smooth section and non degenerate (for details, we refer \cite{chen,indranil}).  For calculating the cohomology of loop space, Chen in \cite{chen} defines a graded differential algebra:  with spaces consisting differential form arising from  iterated integral and a special type of derivative which he called exterior derivative $d_{chen}$ on iterated integral differential form. For full detail, we refer to \cite{chen}.  We have
\begin{equation}(d_{chen}\Omega^\omega)_\gamma(X,Y,Z)=\int (d\omega)_{\gamma(t)}(X(t),Y(t),Z(t))dt =0.\end{equation}

For the form $\Omega^\omega$, this exterior derivative definition matches with the definition of general exterior derivative of smooth form in Fr\'{e}chet space.  See definition 33.12 of \cite{KM},  which we have to use for computing $d(\Omega^\omega)$.  By a obvious calculation we see that
$d(\Omega^\omega)= d_{chen}\Omega^\omega$.

As $\omega$ is closed, $(d\omega)=0$, we have $d(\Omega^\omega)=0$.  Therefore $\Omega^\omega$ is closed 2-form in the general sense [Kriegl and Michor sense].

Also $\Omega^\omega$ is non-degenerate is clear from \cite{chen, indranil}.  Hence $\Omega^\omega$ is weak symplectic structure on $LM$.

We make a remark that if $J$ is an almost complex structure on $M$ compatible with the  symplectic structure $\omega$ on $M$.  Following \cite{pradip},\cite{indranil}, if we define an endomorphism $\mathfrak{J}$ of $T_\gamma LM$ by
$\mathfrak{J}s_1= s_2$, if $J(s_1(t))= s_2(t)$ for all $t$.  Then this is an almost complex structure on $LM$ compatible with $\Omega^\omega$.

\section{Darboux chart on $L\mbr^n$}
In 1969, for symplectic Banach manifolds,  Weinstein \cite{weinstein} proved the existence of Darboux chart for strong symplectic Banach manifolds. In 1972, Marsden \cite{marsden} showed that Darboux chart may not exists for weak symplectic Banach manifold.  In 1999 Bambusi \cite{bambusi} has given the necessary and sufficient condition for existence of  Darboux charts for weak symplectic Banach manifold (in the case when model space are reflexive).  For a weak symplectic Fr\'{e}chet manifold, the general existence theorem is not known.  In \cite{pradip3},  we  prove a general existence theorem for a special type of Fr\'{e}chet manifold called PLB (projective limit of Banach manifold) with a special type of weak symplectic structure.
In this article when Fr\'{e}chet manifold is $L\mbr^n$ and symplectic structure is $\Omega^\omega$, then situation become almost trivial.

Denote $\Omega^s:= \Omega^{\omega_s}$ then we have following theorem,
\begin{theorem}\label{thm:isotopyalpha} If $\omega_s$ are symplectic form on $\mathbb R^n$ and suppose there is an isotopy $\phi$ on $\mbr^n$ such that $\phi^*_s(\omega_s)=\alpha_0$, then there is an isotopy $\phi^L$ on Loop space such that $(\phi^L)^*(\Omega^s)= \Omega^0$
\end{theorem}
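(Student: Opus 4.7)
The plan is to take $\phi^L$ to be exactly the lifted isotopy constructed in the previous section from the base isotopy $\phi$, and then verify the pullback identity by a direct pointwise computation under the defining integral for $\Omega^{\omega_s}$. Since the previous proposition already shows that $\phi^L$ is an isotopy of $L\mathbb{R}^n$ with derivative
\[
d\phi^L_s(\gamma)(X) = \bigl(t \mapsto d\phi_s(\gamma(t))(X(t))\bigr),
\]
no further construction is required; all the work goes into the computation.

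Concretely, I would fix $s$, $\gamma \in L\mathbb{R}^n$, and $X,Y \in T_\gamma L\mathbb{R}^n$, and unwind the pullback:
\begin{align*}
\bigl((\phi^L_s)^*\Omega^s\bigr)_\gamma(X,Y)
&= \Omega^s_{\phi^L_s(\gamma)}\!\bigl(d\phi^L_s(\gamma)(X), d\phi^L_s(\gamma)(Y)\bigr) \\
&= \int_0^1 (\omega_s)_{\phi_s(\gamma(t))}\!\bigl(d\phi_s(\gamma(t))(X(t)),\, d\phi_s(\gamma(t))(Y(t))\bigr)\,dt \\
&= \int_0^1 (\phi_s^*\omega_s)_{\gamma(t)}(X(t), Y(t))\,dt.
\end{align*}
Applying the hypothesis $\phi_s^*\omega_s = \omega_0$ pointwise in $t$ on $\mathbb{R}^n$ and recognising the resulting integral as $\Omega^0_\gamma(X,Y)$ closes the argument.

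There is no real obstacle here: because $\Omega^\omega$ is built from $\omega$ by pointwise evaluation followed by integration over $S^1$, any base-manifold identity that holds pointwise in $t$ transfers immediately to an identity between loop-space forms. The only place where care is actually needed has already been discharged earlier in the excerpt, namely verifying that $\phi^L$ is smooth (via the criterion that its associated map $\mathbb{R} \times L\mathbb{R}^n \to L\mathbb{R}^n$ sends smooth curves to smooth curves), that each $\phi^L_s$ is a diffeomorphism, and that the directional derivative of $\phi^L_s$ takes the nice pointwise form above. Once these facts are cited, the theorem reduces to one line of bookkeeping, and as noted in the introduction, combining it with Moser's theorem on $\mathbb{R}^n$ (which supplies the base-space isotopy $\phi$ trivialising a smooth family $\omega_s$ on a contractible domain) will yield a Darboux chart around any $\gamma \in L\mathbb{R}^n$.
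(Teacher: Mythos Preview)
Your proposal is correct and follows essentially the same approach as the paper: lift $\phi$ to $\phi^L$, invoke the pointwise derivative formula for $d\phi^L_s$, and compute the pullback under the integral, reducing to the base hypothesis $\phi_s^*\omega_s=\omega_0$. The only difference is cosmetic---you insert the intermediate step $\int_0^1(\phi_s^*\omega_s)_{\gamma(t)}(X(t),Y(t))\,dt$ where the paper passes directly from the second line to $\int\omega^0_{\gamma(t)}(X(t),Y(t))\,dt$.
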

\begin{proof}
We have given isotopy $\phi:\mbr\times \mbr^n\to \mbr^n$ such that
$(\phi^s)^*(\omega^s)= \omega^0$,  Then as in section 2, define
\begin{eqnarray*}
&&\phi^L:\mbr\times L\mbr^n\to L\mbr^n\\
\text{by }&&\phi^L_s(\gamma)= (t\to \phi^s(\gamma(t)));
\end{eqnarray*}
we have seen already that by equation \ref{derivativeofphiL}, $d\phi_s^L(\gamma)(X)= (t\to d\phi_s(\gamma(t))(X(t))).$
Therefore we have
\begin{eqnarray*}(\phi^L_s)^*(\Omega^s)_\gamma(X,Y)
&=&\Omega^s_{\phi^L_s(\gamma)}(d\phi_s^L(\gamma)(X),d\phi_s^L(\gamma)(Y))\\
&=&\int\omega^s_{\phi_s(\gamma(t))}(d\phi_s(\gamma(t))(X(t)),d\phi_s(\gamma(t))(Y(t)))dt\\
&=&\int \omega^0_{\gamma(t)}(X(t),Y(t))dt\\
&=&\Omega^0(X,Y)
\end{eqnarray*}
\end{proof}

Theorem \ref{thm:isotopyalpha} concludes that for any $\Omega^\omega$, $L\mbr^n$ admits Darboux chart.

\section{Cohomology of loop space $L\mbr^n$}
By the symplectic cohomology, we mean the definition given by Kriegl and Michor in \cite{KM}.  We will follow  notations and definitions of section 48 of \cite{KM}.  For sake of completeness, below we will define the required terms.

Let $(\mM, \sigma)$ be a weak symplectic Fr\'{e}chet manifold.  Let $T_x^\sigma \mM$ denotes the real linear subspace $T_x^\sigma \mM= \sigma_x^\vee(T_x \mM)\subset T_x^*(\mM)$. These vector space fit to form a sub bundle of $T^*\mM$ (\cite{KM},\S48.4) and $\sigma^\vee:T\mM\to T^\sigma \mM$ is vector space isomorphism.   Define $C^\infty_\sigma(\mM,\mbr)\subset C^\infty(\mM,\mbr)$ be the linear subspace consisting of all smooth functions $f:\mM\to \mbr$ such that $df:\mM\to T^*\mM$ factors to a smooth mapping $\mM\to T^\sigma \mM$.

In other words,  $f\in C^\infty_\sigma(\mM,\mbr)$ if there exists a smooth $\sigma$-gradient $grad^\sigma f\in \mathfrak{X}(\mM)$ such that for given $p\in \mM$ and $Y\in T_p\mM$ we have
$df_p(Y)= \sigma_p(grad^\sigma f|_p, Y)$.  Detail description of these spaces and analysis of weak symplectic manifold is given in (\cite{KM},\S48)

Let $C^\infty(L^k_{alt}(T\mM,\mbr)^\sigma)$ be the space of smooth sections of vector bundle with fiber $L^k_{alt}(T_x\mM,\mbr)^{\sigma_x}$ consisting of all bounded skew symmetric  forms $\omega$ with $\omega(., X_2,..., X_k)\in T_x^\sigma \mM$.
Let $$\Omega_\sigma^k(\mM):= \{\omega\in C^\infty(L_{alt}^k(T\mM,\mbr)^\sigma): d\omega\in C^\infty(L_{alt}^{k+1}(T\mM,\mbr)^\sigma)\}.$$
Since $d^2=0$ and wedge product of $\sigma$ dual forms is again a $\sigma$- dual form [see page 527 of \cite{KM}]. We have a graded differential subalgebra $(\Omega_\sigma(\mM),d)$, whose cohomology is called the symplectic cohomology and  will be denoted by $H^k_\sigma(\mM)$.  We again mention that this definition of the symplectic cohomology is not same as the symplectic cohomology defined by Floer.

Following [\S34,\cite{KM}], for a smooth convenient (in particular Fr\'{e}chet) manifold $\mM$ consider a graded algebra $$\Omega(\mM)=\oplus_{k=0}^\infty \Omega^k(\mM)$$
$d(\phi\wedge \psi)= d\phi\wedge \psi+ (-1)^{deg(\phi)}\phi\wedge d\psi$.  Define
\begin{equation}\label{defn:derhamcohomology}
H^k_{DR}(\mM)=\frac{\{\omega\in \Omega^k(\mM): d\omega=0\}}{\{d\phi: \phi\in \Omega^{k-1}(\mM)\}}
\end{equation}
$H^k_{DR}(\mM)$ is called $k$-th  de-Rham cohomology of $\mM$.

Our aim of this section is to show for loop space $\mM= L\mbr^n$ with symplectic structure $\sigma= \Omega^\omega$, $$H^k_\sigma(LM)\simeq\ H^k_{DR}(LM)$$

For this first we have the following proposition:
\begin{proposition}\label{existenceofpartitionofunity}
$(LM,\sigma)$ admits smooth partition of unity in $C_\sigma^\infty(LM, \mbr)$.
\end{proposition}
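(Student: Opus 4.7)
The plan is to reduce the assertion to the existence of bump functions in $C^\infty_\sigma(L\mbr^n,\mbr)$, and then to assemble them into a partition of unity using the standard machinery of smoothly paracompact Fr\'{e}chet manifolds. The key structural fact to exploit is that $C^\infty_\sigma$ is an algebra closed under smooth post-composition and under taking reciprocals of pointwise positive elements: these follow directly from the Leibniz and chain rules, since multiplying a $\sigma$-dual covector by a scalar keeps it inside the subbundle $T^\sigma(L\mbr^n)$ of $T^*L\mbr^n$.

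First I would produce a rich supply of functions in $C^\infty_\sigma$. For any smooth $F:S^1\times\mbr^n\times\cdots\times\mbr^n\to\mbr$ depending on $t$ and the $k$-jet of $\gamma$, set
\[
\tilde F(\gamma)=\int_0^1 F\bigl(t,\gamma(t),\gamma'(t),\dots,\gamma^{(k)}(t)\bigr)\,dt.
\]
Differentiating as in equation \ref{derivativeoff} and integrating by parts on $S^1$ (no boundary terms, as we are on the circle) gives
\[
d\tilde F_\gamma(X)=\int_0^1 G\bigl(t,\gamma(t),\dots,\gamma^{(2k)}(t)\bigr)\cdot X(t)\,dt
\]
for some smooth $G$ computable from $F$. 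Because $\omega$ is non-degenerate on $\mbr^n$, the map $v\mapsto\omega(v,\cdot)$ is a linear isomorphism $\mbr^n\to(\mbr^n)^*$, and we may define a smooth vector field $V^L$ on $L\mbr^n$ by setting $V^L(\gamma)(t)$ to be the unique vector with $\omega\bigl(V^L(\gamma)(t),\cdot\bigr)=G(t,\gamma(t),\dots)\cdot(\cdot)$. Then $d\tilde F_\gamma(X)=\Omega^\omega_\gamma(V^L(\gamma),X)$, so $\tilde F\in C^\infty_\sigma(L\mbr^n,\mbr)$.

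Next I would use these to construct bump functions. The Fr\'{e}chet topology on $L\mbr^n$ is generated by the Sobolev seminorms $\|\gamma\|_{H^k}^2=\sum_{j\le k}\int_0^1|\gamma^{(j)}(t)|^2\,dt$ (equivalent to the $C^k$ seminorms via Sobolev embedding on $S^1$), so the sets $\{\gamma:\|\gamma-\gamma_0\|_{H^k}<\varepsilon\}$ form a neighborhood basis at $\gamma_0$. The function $\rho(\gamma):=\|\gamma-\gamma_0\|_{H^k}^2$ is of the form $\tilde F$ above (with $F$ depending on the data of $\gamma_0$), and hence lies in $C^\infty_\sigma$. Composing with a smooth cutoff $\chi:\mbr\to[0,1]$ satisfying $\chi(0)=1$ and $\operatorname{supp}\chi\subset[0,\varepsilon^2)$ yields $\chi\circ\rho\in C^\infty_\sigma$, equal to $1$ at $\gamma_0$ and vanishing outside the given neighborhood.

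Finally I would assemble the partition of unity. As a separable nuclear Fr\'{e}chet space, $L\mbr^n$ is smoothly paracompact in the sense of \S 16 of \cite{KM}. Given an open cover, pass to a locally finite countable refinement, produce for each set of the refinement a non-negative bump $\phi_i\in C^\infty_\sigma$ supported inside it with $\sum_i\phi_i(\gamma)>0$ for every $\gamma$, and normalize by setting $\psi_i:=\phi_i/\sum_j\phi_j$. Local finiteness together with the closure of $C^\infty_\sigma$ under addition and under reciprocals of positive elements then gives $\psi_i\in C^\infty_\sigma$. The main technical obstacle is verifying that the Kriegl-Michor partition-of-unity construction can be restricted to the subalgebra $C^\infty_\sigma$; this reduces to the two ingredients above, namely the existence of $C^\infty_\sigma$-bumps around every point inside every open neighborhood, and the algebraic closure properties of $C^\infty_\sigma$.
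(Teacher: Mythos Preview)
Your argument is sound and takes a genuinely different route from the paper's. The paper proceeds much more directly: given a cover $\{\mathcal{U}_\alpha\}$ of $LM$, it pushes down to a cover of $M$ by $U_\alpha := \{\gamma(t):\gamma\in\mathcal{U}_\alpha,\ t\in S^1\}$, chooses an ordinary partition of unity $\{f_\alpha\}$ on $M$ subordinate to $\{U_\alpha\}$, and lifts by $\hat f_\alpha(\gamma):=\int_0^1 f_\alpha(\gamma(t))\,dt$. The Hamiltonian vector field $X_\alpha$ of $f_\alpha$ on $(M,\omega)$ lifts pointwise to a vector field $\hat X_\alpha(\gamma)(t):=X_\alpha(\gamma(t))$ on $LM$, and equation~\ref{derivativeoff} gives $d\hat f_\alpha=\Omega^\omega(\hat X_\alpha,\cdot)$, so $\hat f_\alpha\in C^\infty_\sigma$; finally $\sum_\alpha\hat f_\alpha=\int_0^1\sum_\alpha f_\alpha=1$.

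The paper's construction is much shorter and is phrased for general $(M,\omega)$, whereas yours, as written, is specific to $L\mbr^n$. On the other hand, your use of higher-derivative functionals together with integration by parts on $S^1$ produces bump functions that are genuinely adapted to the full Fr\'echet topology of the loop space. The paper's lifted functions $\hat f_\alpha$ depend only on the $C^0$-data (indeed only on the image) of $\gamma$, so the step ``$\operatorname{supp}\hat f_\alpha\subset\mathcal{U}_\alpha$ since $\operatorname{supp} f_\alpha\subset U_\alpha$'' is delicate when $\mathcal{U}_\alpha$ is, say, a $C^k$-ball with $k\ge 1$: a loop whose trace lies in $U_\alpha$ need not itself belong to $\mathcal{U}_\alpha$. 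Your Sobolev-norm bumps sidestep this issue entirely, at the price of a longer argument and the additional verification that $C^\infty_\sigma$ is closed under the algebraic operations needed to normalise.
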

\begin{proof}
As $M$ is embedded in some $R^n$, $LM$ is embedded in $L\mbr^n$ (\cite{stacey2}). As  $L\mbr^n$ is metric space, therefore  $LM$ admits smooth partition of unity in $C^\infty(LM,\mbr)$ but we are needed to show that $LM$ admits smooth partition of unity in $C_\sigma^\infty(LM,\mbr)$.

Let $\{\mathcal{U}_\alpha\}$ be a covering of $LM$. For each $\mathcal{U}_\alpha$, define
$$U_\alpha:= \{p=\gamma(t) \text{ for some } \gamma\in \mathcal{U}_\alpha\text{ and } t\in S^1\}$$

As for the loop space $LM$, $ev_t: LM\to M$ is open map for any $t$.  Therefore $U_\alpha$ is open subset of $M$ and this collection covers $M$. Let $\{f_\alpha\}$ be  the partition of unity subordinate to the covering $\{U_\alpha\}$,  then for each $\alpha$, as equation \ref{functionf}, define
$$\hat{f}_\alpha(\gamma)= \int_0^1f(\gamma(t)) dt$$
We see that the support of $\hat{f}_\alpha\subset \mathcal{U}_\alpha$ as support of $f_\alpha \subset U_\alpha$.
Also for each $f_\alpha$, there is a vector field $X_\alpha$ on $M$ such that $df_\alpha= \omega(X_\alpha, .)$.  Let $\hat{X}_\alpha$ be a vector field on $LM$ defined by $\hat{X}_\alpha(\gamma)(t):= X_\alpha(\gamma(t))$.   Call this vector field on $LM$ a associated vector field with $X_\alpha$.

From equation \ref{derivativeoff},  we  see that for $X\in T_\gamma LM$, $(d\hat{f}_\alpha)_\gamma(X)= \int_0^1 df_{\gamma(t)}(X(t))dt$.
This gives
$$(d\hat{f}_\alpha)_\gamma(X)= \int_0^1\omega_{\gamma(t)}(X_\alpha(\gamma(t)),.)$$
That is $$(d\hat{f}_\alpha)_\gamma(X)= \Omega^\omega(\hat{X_\alpha},.)$$
This proves $$\hat{f_\alpha}\subset C_\sigma^\infty(LM,\mbr)$$
Also $\sum_\alpha \hat{f}_\alpha= 1$.  Hence collection $\{\hat{f}_\alpha\}$ is the required partition of unity.
\end{proof}

We recall the theorem from \cite{KM}:
\begin{theorem}[\S48.9,\cite{KM}]\label{thmKM}
If $(M,\sigma)$ is a smooth weakly symplectic manifold which admits smooth partitions of unity in $C_\sigma^\infty(M,\mbr)$, and which admits `Darboux chart', then the symplectic cohomology equals the De Rham cohomology: $H^k_\sigma(M)= H^k_{DR}(M)$.
\end{theorem}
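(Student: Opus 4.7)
The plan is to show that the inclusion of chain complexes $\iota: (\Omega_\sigma^*(M), d) \hookrightarrow (\Omega^*(M), d)$ induces an isomorphism on cohomology, using a Poincaré-lemma-plus-Mayer-Vietoris strategy in which the Darboux hypothesis powers the local step and the $C_\sigma^\infty$ partition of unity powers the patching step.

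First, I would establish a local comparison on a Darboux chart $(\mathfrak{U}, \Phi)$. Because $\Phi$ trivializes $\sigma$ to the constant bounded alternating form $F$ on the model space $E$, I would try to transplant the classical Poincaré homotopy operator $h\omega = \int_0^1 \iota_{R}(\omega)\circ \mu_t \, dt$ (with $R$ the radial vector field and $\mu_t$ radial scaling) to the chart. The key point is that when $\sigma$ has constant coefficients in the chart, the condition $\omega(\cdot, X_2,\dots,X_k) \in T^\sigma M$ is preserved under both $\iota_R$ and under differentiation, so $h$ restricts to a chain homotopy $h: \Omega_\sigma^k(\mathfrak{U}) \to \Omega_\sigma^{k-1}(\mathfrak{U})$ satisfying $dh + hd = \mathrm{id}$ for $k\geq 1$. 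This shows $H^k_\sigma(\mathfrak{U}) = H^k_{DR}(\mathfrak{U}) = 0$ on any star-shaped Darboux chart.

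Next, I would globalize by a Mayer-Vietoris argument. Choose an open cover $\{\mathfrak{U}_\alpha\}$ by Darboux charts and, by the partition of unity hypothesis, a subordinate partition $\{\hat f_\alpha\}\subset C_\sigma^\infty(M,\mbr)$. The crucial observation is that if $\omega \in \Omega_\sigma^k(\mathfrak{U}_\alpha)$, then $\hat f_\alpha \omega \in \Omega_\sigma^k(M)$: indeed $d(\hat f_\alpha \wedge \omega) = d\hat f_\alpha \wedge \omega + \hat f_\alpha \, d\omega$, and since $d\hat f_\alpha$ takes values in $T^\sigma M$ by the defining property of $C_\sigma^\infty$, while the wedge of $\sigma$-dual forms is $\sigma$-dual (used by the author just above the statement), the result stays inside $\Omega_\sigma^{k+1}(M)$. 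With this, the standard bicomplex / Čech-to-de-Rham argument compares both cohomologies to the same Čech cohomology of the cover with coefficients in $\mbr$, giving the isomorphism.

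The main obstacle is the local step: verifying in detail that the Poincaré homotopy $h$ actually lands in $\Omega_\sigma^{k-1}$ rather than merely in $\Omega^{k-1}$. This reduces to checking, for a $\sigma$-dual form $\omega$ on a Darboux chart, that the slot map $v \mapsto (h\omega)(v, X_2,\dots,X_{k-1})$ takes values in the subbundle $T^\sigma M \subset T^*M$; the constancy of $F$ in the chart is precisely what allows one to push the $\sigma^\vee$-factorization through the contraction with $R$ and the integration in $t$. Once this is done, the Mayer-Vietoris/partition-of-unity step is essentially formal, and applying it to $M = L\mbr^n$ with $\sigma = \Omega^\omega$ (whose Darboux charts exist by Theorem \ref{thm:isotopyalpha} and whose $C_\sigma^\infty$-partition of unity exists by Proposition \ref{existenceofpartitionofunity}) yields the desired isomorphism $H^k_\sigma(L\mbr^n) \simeq H^k_{DR}(L\mbr^n)$.
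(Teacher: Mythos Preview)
The paper does not give its own proof of this theorem: it is explicitly \emph{recalled} from \cite{KM}, \S48.9, and then combined with Proposition~\ref{existenceofpartitionofunity} and Theorem~\ref{thm:isotopyalpha} to obtain the corollary for $L\mbr^n$. So there is nothing in the paper to compare your argument against at the level of this statement.

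That said, your sketch is the right one and is essentially the argument given in \cite{KM}: a $\sigma$-compatible Poincar\'e homotopy on Darboux charts (where the constancy of the symplectic form in the chart is what lets the radial homotopy operator preserve the subcomplex $\Omega_\sigma^*$), followed by a Mayer--Vietoris / \v{C}ech--de~Rham patching that requires the partition of unity to live in $C_\sigma^\infty(M,\mbr)$ so that multiplication by the bump functions keeps forms inside $\Omega_\sigma^*$. Your identification of the ``main obstacle'' (checking $h\omega$ is again $\sigma$-dual) is exactly the point where the Darboux hypothesis is used. For the purposes of this paper, however, you should treat Theorem~\ref{thmKM} as a black box from \cite{KM} and focus on verifying its two hypotheses for $(L\mbr^n,\Omega^\omega)$, which is what the paper does.
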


Combining above theorem \ref{thmKM} with proposition \ref{existenceofpartitionofunity} and theorem \ref{thm:isotopyalpha} we conclude that for $(LM,\Omega^\omega)$, we have
$$H^k_{DR}(L\mbr^n)\simeq H^k_{\Omega^\omega}(L\mbr^n).$$

\begin{remark}[Further direction]
Proposition \ref{existenceofpartitionofunity} is not only true for $(L\mbr^n, \Omega^\omega)$ but it is true for any $(LM, \Omega^\omega)$.   But in calculating the derivative of $\phi^L_s$ in section 2 and proving that $(L\mbr^n, \Omega^\omega)$ admits Darboux chart, we used that $\mbr^n$ admits a global Darboux chart for any symplectic form $\omega$.  For general loop space $LM$ in place of $L\mbr^n$, the proof given in section 2, 4 is not working.

Still we believe that $(LM,\Omega^\omega)$ will admit Darboux chart for most of symplectic manifolds $(M,\omega)$. In this direction in \cite{pradip3}, we  studied a general class of Fr\'{e}chet manifolds, called PLB manifold and necessary conditions for existence of Darboux chart on weak symplectic PLB (projective limit of Banach) manifolds.  In the terminology of article \cite{pradip3},  $LM$ is a PLB manifold and $\Omega^\omega$ is a compatible symplectic structure with projective system.  We hope that for some symplectic manifold $(M,\omega)$ and loop space over that $M$,  $(LM,\Omega^\omega)$ may admit Darboux chart.   Suppose for some $(M,\omega)$, $(LM,\Omega^\omega)$ admits Darboux chart,  for those $M$, we will have $H^k_{DR}(L\mbr^n)\simeq H^k_{\Omega^\omega}(L\mbr^n)$ for any $k\geq 0$.
\end{remark}


\begin{thebibliography}{99}
\bibitem{Michor}P. W. Michor, Manifolds of differentiable mappings, Shiva Mathematics Series, 3. Shiva Publishing Ltd., Nantwich, 1980.
\bibitem{wurzbacher}T. Wurzbacher, Symplectic geometry of the loop space of a Riemannian manifold, Jour. Geom. Phys. 16 (1995) 345--384.
\bibitem{KM} A. Kriegl and P. W. Michor. The Convenient Setting of Global Analysis, Mathematical Surveys and Monographs. American Mathematical Society, 1997.
\bibitem{stacey2}Andrew Stacey,	The differential topology of loop spaces. arXiv:math/0510097.
\bibitem{weinstein}A. Weinstein, Symplectic Structures on Banach Manifolds. Bull. Amer. Math. Soc. 75, 1040--
1041 (1969).
\bibitem{marsden} J. E. Marsden: Darboux's Theorem Fails for Weak Symplectic Forms. Proc. Amer. Math. Soc.
32, (1972). MR 45:2755
\bibitem{bambusi} D. Bambusi, On the Darboux theorem for weak symplectic manifolds. Porc. Amer. Math. Soc. 127 (1999), no. 11, 3383–-3391.
\bibitem{hamilton}Richard S.Hamilton, The Inverse function theorem of Nash and Moser, Bull. Amer. Math. Soc. 7 (1982).
\bibitem{chen}K.-T. Chen, Iterated integrals of differential forms and loop space homology, Ann. of Math. 97 (1973) 217–-246.
\bibitem{indranil} Indranil Biswas and Saikat Chatterjee, Geometric structures on path spaces:International Journal of Geometric Methods in Modern Physics (IJGMMP), Volume: 8, Issue: 7(2011) pp. 1553--1569
\bibitem{pradip} Pradip Kumar, Almost complex structure on path space, Int. J. Geom. Methods Mod. Phys., 10, 1220034 (2013)
\bibitem{pradip3}Pradip Kumar, Darboux chart on projective limit of weak symplectic Banach manifold,
arXiv:1309.1693
\end{thebibliography}
\end{document}